\newtheorem{theorem}{Theorem}[section]
\theoremstyle{definition}
\theoremstyle{remark}
\begin{document}

\title[Hermite-Hadamard's Type Inequalities on a Ball]{Hermite-Hadamard's Type Inequalities on a Ball }

\author[M. Rostamian Delavar]{M. Rostamian Delavar}
\address{Department of Mathematics, Faculty of Basic Sciences, University of Bojnord, P. O. Box 1339, Bojnord 94531, Iran}
\email{\textcolor[rgb]{0.00,0.00,0.84}{m.rostamian@ub.ac.ir}}


\subjclass[2010]{Primary 26A51, 26D15, 52A01 Secondary  26A51 }

\keywords{Hermite-Hadamard inequality, Trapezoid type inequality, Mid-point type inequality,  Spherical coordinates.}

\begin{abstract}
Some trapezoid and mid-point type inequalities related to the Hermite-Hadamard inequality for the mappings defined on a ball in
the space are obtained.
\end{abstract}

\maketitle

\section{Introduction}
Consider the ball $\bar{B}(C,R)$ in the space where $C=(a,b,c)\in \mathbb{R}^3$, $R>0$ and $$\bar{B}(C,R)=\{(x,y,z)\in \mathbb{R}^3|(x-a)^2+(y-b)^2+(z-c)^2\leq R^2\}.$$
Also consider $\sigma(C,R)$ as the boundary of $\bar{B}(C,R)$, i.e. $$\sigma(C,R)=\{(x,y,z)\in \mathbb{R}^3|(x-a)^2+(y-b)^2+(z-c)^2= R^2\}.$$
The following result has proved in \cite{drag}, which is the Hermite-Hadamard inequality for convex functions defined on a ball $\bar{B}(C,R)$.
\begin{theorem}
Let $\bar{B}(C,R)\to \mathbb{R}$ be a convex mapping on the ball $\bar{B}(C,R)$. Then we have the inequality:
\begin{align}\label{eq.08}
&f(a,b,c)\leq\frac{3}{4\pi R^3}\int\int\int_{\bar{B}(C,R)}f(x,y,z)dv\leq\\
&\frac{1}{4\pi R^2}\int\int_{\sigma(C,R)}f(x,y,z)d\sigma.\notag
\end{align}
\end{theorem}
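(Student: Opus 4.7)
My approach will exploit the symmetry of the ball about its center $C=(a,b,c)$ together with the defining convexity inequality applied pointwise, and then integrate. Throughout I will use spherical coordinates based at $C$, writing a generic point of $\bar B(C,R)$ as $C+r\omega$ with $r\in[0,R]$ and $\omega$ a unit vector, so that $dv=r^2\,dr\,d\omega$ and the surface measure on $\sigma(C,R)$ is $d\sigma=R^2\,d\omega$. The plan is to derive both halves of \eqref{eq.08} from elementary pointwise convex-combination estimates and then carry out the spherical integration, with a substitution $\omega\mapsto-\omega$ to symmetrize.

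For the left inequality, the observation is that for every $\omega$ on the unit sphere and every $r\in[0,R]$ the two points $C+r\omega$ and $C-r\omega$ both lie in $\bar B(C,R)$ and have midpoint $C$. Convexity of $f$ yields
\begin{equation*}
f(a,b,c)=f(C)\le\tfrac{1}{2}\bigl(f(C+r\omega)+f(C-r\omega)\bigr).
\end{equation*}
Integrating against the measure $r^2\,dr\,d\omega$ over $[0,R]\times S^2$ and identifying the two resulting terms via $\omega\mapsto-\omega$, the left inequality drops out after dividing by $\mathrm{vol}(\bar B(C,R))=\tfrac{4\pi R^3}{3}$.

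For the right inequality, the key observation is that every point of the ball can be written as an explicit convex combination of two \emph{antipodal boundary points} along the same diameter: for $r\in[0,R]$,
\begin{equation*}
C+r\omega=\tfrac{R-r}{2R}(C-R\omega)+\tfrac{R+r}{2R}(C+R\omega),
\end{equation*}
and since both $C\pm R\omega$ lie on $\sigma(C,R)$, convexity gives
\begin{equation*}
f(C+r\omega)\le\tfrac{R-r}{2R}f(C-R\omega)+\tfrac{R+r}{2R}f(C+R\omega).
\end{equation*}
I would then multiply by $r^2$, integrate in $r\in[0,R]$ and $\omega\in S^2$, again invoke $\omega\mapsto-\omega$ to merge the two terms on the right, and use $\tfrac{R-r}{2R}+\tfrac{R+r}{2R}=1$ so that the radial integration reduces to $\int_0^R r^2\,dr=R^3/3$. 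Matching this against the definition of the surface integral gives precisely the prefactor $\tfrac{1}{4\pi R^2}$ on the right of \eqref{eq.08}.

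No step is genuinely deep; the main thing to get right is the bookkeeping of the constants, in particular verifying that the radial weights sum to one so the $\omega$-integrals decouple, and that the powers of $R$ from the volume Jacobian and the surface element combine into the stated normalization. A slightly more conceptual reformulation, which I would present if space permitted, is to introduce the spherical mean $F(r)=\tfrac{1}{4\pi}\int_{S^2}f(C+r\omega)\,d\omega$, use the same two convex-combination estimates to show $F(0)=f(C)\le F(r)\le F(R)$ for all $r\in[0,R]$, and then observe that the volume average equals $\tfrac{3}{R^3}\int_0^R r^2 F(r)\,dr$, which manifestly lies between $F(0)$ and $F(R)$.
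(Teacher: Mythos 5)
Your argument is correct, but there is no in-paper proof to compare it with: the paper quotes this theorem from Dragomir's article \cite{drag} and uses it as a black box, so the only thing to check is whether your proof stands on its own. It does. For the left inequality, the midpoint estimate $f(C)\le\tfrac12\big(f(C+r\omega)+f(C-r\omega)\big)$ integrated against $r^2\,dr\,d\omega$ and symmetrized by $\omega\mapsto-\omega$ yields exactly $\tfrac{4\pi R^3}{3}f(C)\le\iiint_{\bar B(C,R)}f\,dv$. For the right inequality, the identity $C+r\omega=\tfrac{R-r}{2R}(C-R\omega)+\tfrac{R+r}{2R}(C+R\omega)$ is a genuine convex combination for $r\in[0,R]$ (both weights nonnegative, summing to $1$), and after the same symmetrization the $\omega$-integral decouples from $\int_0^R r^2\,dr=R^3/3$, giving $\iiint_{\bar B(C,R)}f\,dv\le\tfrac{R}{3}\iint_{\sigma(C,R)}f\,d\sigma$, which is the stated bound after dividing by $\tfrac{4\pi R^3}{3}$; your spherical-mean reformulation $F(0)\le F(r)\le F(R)$ packages both halves cleanly and is, if anything, the preferable write-up. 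The one caveat worth recording is inherited from the statement itself rather than from your argument: a function convex on the closed ball is continuous on the interior but need not be continuous up to $\sigma(C,R)$, so the surface integral should be understood under an implicit continuity (or at least integrability on the sphere) assumption, which the paper also leaves unstated.
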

Motivated by (\ref{eq.08}), we obtain some trapezoid and mid-point type inequalities related to the Hermite-Hadamard inequality for the mappings defined on a ball $\bar{B}(C,R)$ in
the space. In this paper we use the spherical coordinates to prove our results.

\section{Main Results}

The following is trapezoid type inequalities related to the (\ref{eq.08}) for the mappings defined on $\bar{B}(C,R)$.
\begin{theorem}
Suppose that $\bar{B}(C,R)\subset I^\circ$, where $I\subset R^3$ and consider $f:\bar{B}(C,R)\to \mathbb{R}$ which has continuous partial derivatives with respect to the variables $\rho$, $\varphi$ and $\theta$ on $I^\circ$ in spherical coordinates. If $|\frac{\partial f}{\partial \rho}|$ is convex on $\bar{B}(C,R)$, then
\begin{align}\label{eq.08'}
&\bigg|\frac{1}{4\pi R^2}\int\int_{\sigma(C,R)}f(x,y,z)d\sigma-\frac{1}{\frac{4}{3}\pi R^3}\int\int\int_{\bar{B}(C,R)}f(x,y,z)dv\bigg|\leq\\
&\frac{1}{16\pi R}\int\int_{\sigma(C,R)}\big|\frac{\partial f}{\partial \rho}\big|(x,y,z)d\sigma\notag.
\end{align}
Furthermore above inequality is sharp.
\end{theorem}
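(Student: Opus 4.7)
The strategy is to pass to spherical coordinates $(\rho,\varphi,\theta)$ centered at $C$, where $x=a+\rho\sin\varphi\cos\theta$, $y=b+\rho\sin\varphi\sin\theta$, $z=c+\rho\cos\varphi$; the surface element on $\sigma(C,R)$ is then $R^{2}\sin\varphi\,d\varphi\,d\theta$ and the volume element is $\rho^{2}\sin\varphi\,d\rho\,d\varphi\,d\theta$. With $f$ expressed in these coordinates, the left-hand side of \eqref{eq.08'} becomes $|\Delta|$, where
\[
\Delta=\frac{1}{4\pi}\int_{0}^{2\pi}\!\!\int_{0}^{\pi}\sin\varphi\Bigl[f(R,\varphi,\theta)-\frac{3}{R^{3}}\int_{0}^{R}\rho^{2} f(\rho,\varphi,\theta)\,d\rho\Bigr]d\varphi\,d\theta.
\]
An integration by parts in $\rho$ (using $[\rho^{3}f]_{0}^{R}=R^{3}f(R,\varphi,\theta)$) produces the pointwise radial identity
\[
f(R,\varphi,\theta)-\frac{3}{R^{3}}\int_{0}^{R}\rho^{2}f(\rho,\varphi,\theta)\,d\rho=\frac{1}{R^{3}}\int_{0}^{R}\rho^{3}\frac{\partial f}{\partial\rho}(\rho,\varphi,\theta)\,d\rho,
\]
after which the triangle inequality gives
\[
|\Delta|\leq\frac{1}{4\pi R^{3}}\int_{0}^{2\pi}\!\!\int_{0}^{\pi}\sin\varphi\int_{0}^{R}\rho^{3}\Bigl|\frac{\partial f}{\partial\rho}\Bigr|(\rho,\varphi,\theta)\,d\rho\,d\varphi\,d\theta.
\]

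The decisive step is to bound $|\partial f/\partial\rho|(\rho,\varphi,\theta)$ using convexity along the \emph{full diameter} through $C$ in the direction $(\varphi,\theta)$, not along a radius. Since $|\partial f/\partial\rho|$ is convex on $\bar B(C,R)$, its restriction to this diameter is convex, and the point at radial coordinate $\rho\in[0,R]$ sits at parameter $(R+\rho)/(2R)$ on the segment from the antipodal boundary point $(R,\pi-\varphi,\theta+\pi)$ to $(R,\varphi,\theta)$, yielding
\[
\Bigl|\frac{\partial f}{\partial\rho}\Bigr|(\rho,\varphi,\theta)\leq\frac{R+\rho}{2R}\Bigl|\frac{\partial f}{\partial\rho}\Bigr|(R,\varphi,\theta)+\frac{R-\rho}{2R}\Bigl|\frac{\partial f}{\partial\rho}\Bigr|(R,\pi-\varphi,\theta+\pi).
\]
The inner moments are $\int_{0}^{R}\rho^{3}(R+\rho)/(2R)\,d\rho=9R^{4}/40$ and $\int_{0}^{R}\rho^{3}(R-\rho)/(2R)\,d\rho=R^{4}/40$. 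Because the antipodal change of variables $(\varphi,\theta)\mapsto(\pi-\varphi,\theta+\pi)$ preserves $\sin\varphi\,d\varphi\,d\theta$, after performing it in the second term the two contributions merge into a single surface integral with total coefficient $9R^{4}/40+R^{4}/40=R^{4}/4$; dividing by $4\pi R^{3}$ recovers exactly $\frac{1}{16\pi R}\iint_{\sigma(C,R)}|\partial f/\partial\rho|\,d\sigma$, which is the claimed bound.

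For sharpness I would test the radial function $f(x,y,z)=\sqrt{(x-a)^{2}+(y-b)^{2}+(z-c)^{2}}$, for which $|\partial f/\partial\rho|\equiv 1$ is trivially convex and a direct evaluation gives $R/4$ on both sides of \eqref{eq.08'}; the mild $C^{1}$ failure at $C$ is absorbed by smoothing with the family $\sqrt{\rho^{2}+\varepsilon^{2}}-\varepsilon$ and passing to the limit $\varepsilon\to 0^{+}$. The main obstacle in the argument is the diameter-versus-radius choice in the convexity step: a convex-combination bound on the radius $[C,P]$ leaves a residual $|\partial f/\partial\rho|(C)$ that the right-hand side of \eqref{eq.08'}, which uses only boundary data, has no way to absorb. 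Recognizing that the diameter-based bound, combined with the antipodal symmetry of $\sin\varphi\,d\varphi\,d\theta$, both eliminates the center contribution and yields the sharp constant $1/(16\pi R)$ is the heart of the proof.
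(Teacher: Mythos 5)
Your proof of the main inequality is correct, and it diverges from the paper's at exactly one point: the convexity step. After the integration by parts in $\rho$ (identical to the paper's), the paper writes the point at radial coordinate $\rho$ as the convex combination $(1-\tfrac{\rho}{R})C+\tfrac{\rho}{R}P$ along the \emph{radius}, which produces the two terms $\tfrac{\pi R^4}{5}\bigl|\tfrac{\partial f}{\partial\rho}\bigr|(a,b,c)+\tfrac{R^2}{5}\iint_{\sigma}\bigl|\tfrac{\partial f}{\partial\rho}\bigr|\,d\sigma$, and then absorbs the center term by applying the quoted Hermite--Hadamard inequality \eqref{eq.08} to the convex function $\bigl|\tfrac{\partial f}{\partial\rho}\bigr|$, giving $\tfrac{R^2}{20}+\tfrac{R^2}{5}=\tfrac{R^2}{4}$. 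Your diameter-plus-antipodal-symmetry argument reaches the same $\tfrac{R^4}{4}$ via $\tfrac{9R^4}{40}+\tfrac{R^4}{40}$ without ever invoking \eqref{eq.08}, so your proof is self-contained where the paper's leans on the cited theorem; your moment computations and the measure-preservation of $(\varphi,\theta)\mapsto(\pi-\varphi,\theta+\pi)$ check out. One claim in your closing discussion is wrong, however: you assert that the radius-based bound leaves a residual $\bigl|\tfrac{\partial f}{\partial\rho}\bigr|(C)$ that the boundary integral ``has no way to absorb.'' It can be absorbed --- precisely via $\bigl|\tfrac{\partial f}{\partial\rho}\bigr|(a,b,c)\le\tfrac{1}{4\pi R^2}\iint_{\sigma}\bigl|\tfrac{\partial f}{\partial\rho}\bigr|\,d\sigma$ from \eqref{eq.08} --- and this is exactly the paper's route; remarkably both routes land on the same constant $\tfrac{1}{16\pi R}$.

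On sharpness there is a genuine gap in your proposal. Your extremal function is essentially the paper's ($f=\rho$ versus $f=R-\rho$), and the equality $R/4=R/4$ is right, but your regularization $f_{\varepsilon}=\sqrt{\rho^{2}+\varepsilon^{2}}-\varepsilon$ does not produce admissible functions: its radial derivative is $\rho/\sqrt{\rho^{2}+\varepsilon^{2}}$, whose second derivative in $\rho$ is $-3\varepsilon^{2}\rho(\rho^{2}+\varepsilon^{2})^{-5/2}<0$, so $\bigl|\tfrac{\partial f_{\varepsilon}}{\partial\rho}\bigr|$ is strictly concave along every ray and therefore not convex on $\bar{B}(C,R)$. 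So the smoothing repairs the $C^{1}$ hypothesis only by breaking the convexity hypothesis. (To be fair, the paper simply uses the non-differentiable $f=R-\rho$ and does not address this at all; but since you explicitly flagged the regularity issue, you need an approximating family that satisfies \emph{both} hypotheses, or else a direct argument that the constant $\tfrac{1}{16\pi R}$ cannot be lowered within the admissible class, and neither is supplied.)
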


\begin{proof}
First notice that
\begin{align}\label{eq.09}
&\int\int\int_{\bar{B}(C,R)}f(x,y,z)dv=\\
&\int_0^{2\pi}\int_0^{\pi}\int_0^R f(a+\rho cos\theta sin\varphi, b+\rho sin\theta cos\varphi, c+\rho cos\varphi)\rho^2 sin\varphi d\rho d\varphi d\theta.\notag
\end{align}

Second notice that
\begin{align}\label{eq.10}
&\int\int_{\sigma(C,R)}f(x,y,z)d\sigma=\\
&\int_0^{2\pi}\int_0^{\pi} f(a+R cos\theta sin\varphi, b+R sin\theta cos\varphi, c+R cos\varphi)R^2 sin\varphi d\varphi d\theta.\notag
\end{align}

Now for fixed $\varphi\in [0,\pi]$ and $\theta\in [0,2\pi]$, we have
\begin{align}\label{eq.11}
&\int_0^R\frac{\partial f}{\partial \rho}(a+\rho cos\theta sin\varphi, b+\rho sin\theta cos\varphi, c+\rho cos\varphi)\rho^3 sin\varphi d\rho=\\
&R^3 f(a+R cos\theta sin\varphi, b+R sin\theta cos\varphi, c+R cos\varphi)-\notag\\
&-3\int_0^R f(a+\rho cos\theta sin\varphi, b+\rho sin\theta cos\varphi, c+\rho cos\varphi)\rho^2 sin\varphi d\rho.\notag
\end{align}

So integrating with respect to $\varphi\in [0,\pi]$ and $\theta\in [0,2\pi]$ in (\ref{eq.11}) along with (\ref{eq.09}), (\ref{eq.10}) and the convexity of $\big|\frac{\partial f}{\partial \rho}\big|$ imply that
\begin{align}\label{eq.12}
&\bigg|R\int\int_{\sigma(C,R)}f(x,y,z)d\sigma-3\int\int\int_{\bar{B}(C,R)}f(x,y,z)dv\bigg|\leq\\
&\int_0^{2\pi}\int_0^{\pi}\int_0^R \big|\frac{\partial f}{\partial \rho}\big|(a+\rho cos\theta sin\varphi, b+\rho sin\theta cos\varphi, c+\rho cos\varphi)\rho^3 sin\varphi d\rho d\varphi d\theta\leq\notag\\
&\int_0^{2\pi}\int_0^{\pi}\int_0^R \big|\frac{\partial f}{\partial \rho}\big|\Big((1-\frac{\rho}{R})(a,b,c)+\frac{\rho}{R}(a+Rcos\theta sin\varphi, b+R sin\theta cos\varphi, c+R cos\varphi\Big)\times\notag\\
&\rho^3 sin\varphi d\rho d\varphi d\theta\leq\notag
\end{align}
\begin{align}
&\int_0^{2\pi}\int_0^{\pi}\int_0^R \rho^3 \big(1-\frac{\rho}{R}\big) \big|\frac{\partial f}{\partial \rho}\big|(a,b,c)sin\varphi d\rho d\varphi d\theta+\notag\\
&\int_0^{2\pi}\int_0^{\pi}\int_0^R \frac{\rho^4}{R}\big|\frac{\partial f}{\partial \rho}\big|\big(a+Rcos\theta sin\varphi, b+R sin\theta cos\varphi, c+R cos\varphi\big)sin\varphi  d\rho d\varphi d\theta=\notag\\
&\frac{\pi R^4}{5}\big|\frac{\partial f}{\partial \rho}\big|(a,b,c)+\notag\\
&\frac{R^4}{5}\int_0^{2\pi}\int_0^{\pi}\big|\frac{\partial f}{\partial \rho}\big|\big(a+Rcos\theta sin\varphi, b+R sin\theta cos\varphi, c+R cos\varphi\big)sin\varphi  d\varphi d\theta=\notag\\
&\frac{\pi R^4}{5}\big|\frac{\partial f}{\partial \rho}\big|(a,b,c)+\frac{R^2}{5}\int\int_{\sigma(C,R)}\big|\frac{\partial f}{\partial \rho}\big|(x,y,z)d\sigma\notag.
\end{align}
Since $\big|\frac{\partial f}{\partial \rho}\big|$ is convex, then from (\ref{eq.08}) and (\ref{eq.12}) we obtain that
\begin{align}\label{eq.13}
&\bigg|R\int\int_{\sigma(C,R)}f(x,y,z)d\sigma-3\int\int\int_{\bar{B}(C,R)}f(x,y,z)dv\bigg|\leq\\
&\frac{R^2}{20}\int\int_{\sigma(C,R)}\big|\frac{\partial f}{\partial \rho}\big|(x,y,z)d\sigma+\frac{R^2}{5}\int\int_{\sigma(C,R)}\big|\frac{\partial f}{\partial \rho}\big|(x,y,z)d\sigma=\notag\\
&\frac{R^2}{4}\int\int_{\sigma(C,R)}\big|\frac{\partial f}{\partial \rho}\big|(x,y,z)d\sigma\notag.
\end{align}
Dividing (\ref{eq.13}) with "$4\pi R^3$" we obtain the desired result (\ref{eq.08'}). For the sharpness of (\ref{eq.08'}) consider the function $f:\bar{B}(C,R)\to \mathbb{R}$ defined as
$$f(x,y,z)=R-\sqrt{(x-a)^2+(y-b)^2+(z-c)^2}.$$
By the use of spherical coordinates we have $f(\rho,\varphi,\theta)=R-\rho$, for $\rho\in [0,R]$, $\varphi\in [0,\pi]$ and $\theta\in [0,2\pi]$. With some calculations we obtain that
\begin{align}\label{eq.14}
\frac{1}{\frac{4}{3}\pi R^3}\int\int\int_{\bar{B}(C,R)}f(x,y,z)dv=\frac{1}{\frac{4}{3}\pi R^3}\int_0^{2\pi}\int_0^{\pi}\int_0^R(R-\rho)\rho^2 sin\varphi d\rho d\varphi d\theta=\frac{R}{4}.
\end{align}
Also
\begin{align}\label{eq.15}
&\int\int_{\sigma(C,R)}f(x,y,z)d\sigma=\\
&\int_0^{2\pi}\int_0^{\pi} f(a+R cos\theta sin\varphi, b+R sin\theta cos\varphi, c+R cos\varphi)R^2 sin\varphi d\varphi d\theta=\notag\\
&\int_0^{2\pi}\int_0^{\pi} (R-R) R^2 sin\varphi d\varphi d\theta=0.\notag
\end{align}
On the other hand since $\big|\frac{\partial f}{\partial \rho}\big|=1$, then
\begin{align}
&\frac{1}{16\pi R}\int\int_{\sigma(C,R)}\big|\frac{\partial f}{\partial \rho}\big|(x,y,z)d\sigma=\frac{4\pi R^2}{16\pi R}=\frac{R}{4}\notag,
\end{align}
which along with (\ref{eq.14}) and (\ref{eq.15}) show that (\ref{eq.08'}) is sharp.
\end{proof}
The following is trapezoid type inequalities related to the (\ref{eq.08}) for the mappings defined on $\bar{B}(C,R)$.
\begin{theorem}
Suppose that $\bar{B}(C,R)\subset I^\circ$, where $I\subset R^3$ and consider $f:\bar{B}(C,R)\to \mathbb{R}$ which has continuous partial derivatives with respect to the variables $\rho$, $\varphi$ and $\theta$ on $I^\circ$ in spherical coordinates. If $|\frac{\partial f}{\partial \rho}|$ is convex on $\bar{B}(C,R)$, then
\begin{align}
&\bigg|\frac{1}{\frac{4}{3}\pi R^3}\int\int\int_{\bar{B}(C,R)}f(x,y,z)dv-f(a,b,c)\bigg|\leq\frac{5}{16\pi R}\int\int_{\sigma(C,R)}\big|\frac{\partial f}{\partial \rho}\big|(x,y,z)d\sigma.\notag
\end{align}
\end{theorem}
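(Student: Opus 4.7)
The plan is to reduce the desired estimate to the already-proved trapezoid inequality (\ref{eq.08'}) via the triangle inequality, after first establishing an auxiliary mid-point-type bound that compares the surface average of $f$ on $\sigma(C,R)$ with the centre value $f(a,b,c)$.

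First I would apply the fundamental theorem of calculus in the radial direction: for every fixed $\varphi\in[0,\pi]$ and $\theta\in[0,2\pi]$,
\begin{align*}
&f(a+R\cos\theta\sin\varphi,b+R\sin\theta\cos\varphi,c+R\cos\varphi)-f(a,b,c)\\
&\qquad=\int_0^R\frac{\partial f}{\partial\rho}(a+\rho\cos\theta\sin\varphi,b+\rho\sin\theta\cos\varphi,c+\rho\cos\varphi)d\rho.
\end{align*}
Multiplying by $R^2\sin\varphi$, integrating over $(\varphi,\theta)\in[0,\pi]\times[0,2\pi]$, and using the surface-integral formula (\ref{eq.10}) yields
\begin{align*}
\int\int_{\sigma(C,R)}f(x,y,z)d\sigma-4\pi R^2 f(a,b,c)=R^2\int_0^{2\pi}\int_0^{\pi}\int_0^R\frac{\partial f}{\partial\rho}\sin\varphi d\rho d\varphi d\theta.
\end{align*}

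Next I would pass to absolute values, apply the convexity bound
\begin{align*}
\Big|\frac{\partial f}{\partial\rho}\Big|(\rho,\varphi,\theta)\leq\Big(1-\frac{\rho}{R}\Big)\Big|\frac{\partial f}{\partial\rho}\Big|(a,b,c)+\frac{\rho}{R}\Big|\frac{\partial f}{\partial\rho}\Big|(R,\varphi,\theta),
\end{align*}
and evaluate the resulting $\rho$-integrals using $\int_0^R(1-\rho/R)d\rho=\int_0^R(\rho/R)d\rho=R/2$. Invoking the Hermite--Hadamard inequality (\ref{eq.08}) applied to the convex function $\big|\frac{\partial f}{\partial\rho}\big|$ to replace $\big|\frac{\partial f}{\partial\rho}\big|(a,b,c)$ by the surface average $\frac{1}{4\pi R^2}\int\int_{\sigma(C,R)}\big|\frac{\partial f}{\partial\rho}\big|d\sigma$, these steps produce the intermediate inequality
\begin{align*}
\Big|\frac{1}{4\pi R^2}\int\int_{\sigma(C,R)}f d\sigma-f(a,b,c)\Big|\leq\frac{1}{4\pi R}\int\int_{\sigma(C,R)}\Big|\frac{\partial f}{\partial\rho}\Big|d\sigma.
\end{align*}

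Finally, combining this auxiliary estimate with (\ref{eq.08'}) through the triangle inequality gives
\begin{align*}
&\Big|\frac{1}{\frac{4}{3}\pi R^3}\int\int\int_{\bar{B}(C,R)}f dv-f(a,b,c)\Big|\\
&\qquad\leq\frac{1}{16\pi R}\int\int_{\sigma(C,R)}\Big|\frac{\partial f}{\partial\rho}\Big|d\sigma+\frac{1}{4\pi R}\int\int_{\sigma(C,R)}\Big|\frac{\partial f}{\partial\rho}\Big|d\sigma,
\end{align*}
which equals $\frac{5}{16\pi R}\int\int_{\sigma(C,R)}\big|\frac{\partial f}{\partial\rho}\big|d\sigma$, as required. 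The only step requiring real thought is the decision to route the argument through the surface average: attempting a single direct radial integration by parts that compares the volume average with $f(a,b,c)$ introduces the weight $(R^3-\rho^3)$ and produces a differently-shaped bound, whereas the two-leg splitting above is precisely what delivers the constant $\frac{5}{16\pi R}$ advertised in the statement.
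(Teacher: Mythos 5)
Your proposal is correct and follows essentially the same route as the paper: the radial fundamental theorem of calculus identity, the convexity splitting of $\big|\frac{\partial f}{\partial \rho}\big|$ along the segment from the centre to the surface point, the Hermite--Hadamard inequality (\ref{eq.08}) to convert $\big|\frac{\partial f}{\partial \rho}\big|(a,b,c)$ into a surface average, and the triangle inequality against (\ref{eq.08'}). The only cosmetic difference is that you apply (\ref{eq.08}) before assembling the triangle inequality while the paper does it afterwards; the constants combine to $\frac{5}{16\pi R}$ identically either way.
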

\begin{proof}
For fixed $\varphi\in [0,\pi]$ and $\theta\in [0,2\pi]$ we have
\begin{align}\label{eq.17}
&\int_0^R\frac{\partial f}{\partial \rho}(a+\rho cos\theta sin\varphi, b+\rho sin\theta cos\varphi, c+\rho cos\varphi) sin\varphi d\rho=\\
&f(a+R cos\theta sin\varphi, b+R sin\theta cos\varphi, c+R cos\varphi)sin\varphi-f(a, b, c) sin\varphi.\notag
\end{align}
Integration with respect to the variables $\varphi\in [0,\pi]$ and $\theta\in [0,2\pi]$ in (\ref{eq.17}) implies that
\begin{align}
&\int_0^{2\pi}\int_0^{\pi}\int_0^R \frac{\partial f}{\partial \rho}(a+\rho cos\theta sin\varphi, b+\rho sin\theta cos\varphi, c+\rho cos\varphi) sin\varphi d\rho d\varphi d\theta=\notag\\
&\int_0^{2\pi}\int_0^{\pi} f(a+Rcos\theta sin\varphi, b+R sin\theta cos\varphi, c+R cos\varphi\Big) sin\varphi d\varphi d\theta-\notag\\
&\int_0^{2\pi}\int_0^{\pi}f(a, b, c) sin\varphi d\varphi d\theta=\frac{1}{R^2}\int\int_{\sigma(C,R)}\big|\frac{\partial f}{\partial \rho}\big|(x,y,z)d\sigma-4\pi f(a,b,c).\notag
\end{align}
So from the convexity of $\big|\frac{\partial f}{\partial \rho}\big|$ we get
\begin{align}\label{eq.19}
&\bigg|\frac{1}{4\pi R^2}\int\int_{\sigma(C,R)}f(x,y,z)d\sigma-f(a,b,c)\bigg|\leq\\
&\frac{1}{4\pi}\int_0^{2\pi}\int_0^{\pi}\int_0^R \big|\frac{\partial f}{\partial \rho}\big|(a+\rho cos\theta sin\varphi, b+\rho sin\theta cos\varphi, c+\rho cos\varphi) sin\varphi d\rho d\varphi d\theta\leq\notag\\
&\frac{1}{4\pi}\int_0^{2\pi}\int_0^{\pi}\int_0^R (1-\frac{\rho}{R})\big|\frac{\partial f}{\partial \rho}\big|(a, b, c) sin\varphi d\rho d\varphi d\theta+\notag\\
&\frac{1}{4\pi}\int_0^{2\pi}\int_0^{\pi}\int_0^R \frac{\rho}{R}\big|\frac{\partial f}{\partial \rho}\big|(a+R cos\theta sin\varphi, b+R sin\theta cos\varphi, c+R cos\varphi) sin\varphi d\rho d\varphi d\theta=\notag\\
&\frac{R}{2}\big|\frac{\partial f}{\partial \rho}\big|(a, b, c)+\frac{1}{8\pi R}\int\int_{\sigma(C,R)}f(x,y,z)d\sigma\notag.
\end{align}
It follows from triangle inequality, (\ref{eq.19}), (\ref{eq.08}) and (\ref{eq.08'}) that
\begin{align*}
&\bigg|\frac{1}{\frac{4}{3}\pi R^3}\int\int\int_{\bar{B}(C,R)}f(x,y,z)dv-f(a,b,c)\bigg|\leq\\
&\bigg|\frac{1}{\frac{4}{3}\pi R^3}\int\int\int_{\bar{B}(C,R)}f(x,y,z)dv-\frac{1}{4\pi R^2}\int\int_{\sigma(C,R)}f(x,y,z)d\sigma \bigg|+\\
&\bigg|\frac{1}{4\pi R^2}\int\int_{\sigma(C,R)}f(x,y,z)d\sigma-f(a,b,c)\bigg|\leq\\
&\frac{1}{16\pi R}\int\int_{\sigma(C,R)}\big|\frac{\partial f}{\partial \rho}\big|(x,y,z)d\sigma+\frac{R}{2}\big|\frac{\partial f}{\partial \rho}\big|(a, b, c)+\frac{1}{8\pi R}\int\int_{\sigma(C,R)}f(x,y,z)d\sigma\leq\notag\\
&\frac{1}{16\pi R}\int\int_{\sigma(C,R)}\big|\frac{\partial f}{\partial \rho}\big|(x,y,z)d\sigma+\frac{1}{8\pi R}\int\int_{\sigma(C,R)}\big|\frac{\partial f}{\partial \rho}\big|(x,y,z)d\sigma+\notag\\
&\frac{1}{8\pi R}\int\int_{\sigma(C,R)}\big|\frac{\partial f}{\partial \rho}\big|(x,y,z)d\sigma=\frac{5}{16\pi R}\int\int_{\sigma(C,R)}\big|\frac{\partial f}{\partial \rho}\big|(x,y,z)d\sigma,\notag
\end{align*}
which implies the desired result.
\end{proof}


\end{document}